\numberwithin{equation}{section}
\newtheorem{theorem}{Theorem}[section]
\newtheorem{corollary}[theorem]{Corollary}
\newtheorem{proposition}[theorem]{Proposition}
\newtheorem{conjecture}[theorem]{Conjecture}
\theoremstyle{definition}
\newtheorem{definition}[theorem]{Definition}
\newtheorem{remark}[theorem]{Remark}
\title{Rationally connected rational double covers\\of primitive Fano varieties}
\author{Aleksandr V. Pukhlikov}
\address{Department of Mathematical Sciences, The University of Liverpool}
\email{pukh@liverpool.ac.uk}
\begin{document}



\maketitle

\begin{prelims}

\DisplayAbstractInEnglish

\bigskip

\DisplayKeyWords

\medskip

\DisplayMSCclass

\bigskip

\languagesection{Fran\c{c}ais}

\bigskip

\DisplayTitleInFrench

\medskip

\DisplayAbstractInFrench

\end{prelims}


\newpage

\setcounter{tocdepth}{2}

\tableofcontents

\section{Statement of the main results}

One of the most challenging problems in the modern birational geometry is the \emph{unirationality problem}: for a given rationally connected
projective variety $V$, is there a rational dominant map ${\mathbb
P}^M\dashrightarrow V$? While in the past 50 years a huge progress
has been made in solving the rationality problem, the
unirationality still remains an unknown territory. There are lots
of explicit examples and constructions of unirational varieties
but not a single result about non-unirationality (in the
rationally connected category). During the past century (starting
with Fano himself and maybe even earlier) the non-unirationality
was conjectured for various classes of algebraic varieties, but
the questions remained unanswered.

The aim of the present paper is to prove a theorem that implies,
in particular, that there are no rational maps of degree 2
$$
{\mathbb P}^M\stackrel{2:1}{\dashrightarrow} V
$$
for a Zariski general hypersurface $V\subset {\mathbb P}^{M+1}$ of
degree $M+1$ (which is a Fano variety of index 1, in particular, a
rationally connected variety). In fact, there are no rational maps
of degree 2
$$
X\stackrel{2:1}{\dashrightarrow} V
$$
with $X$ rationally connected for such hypersurfaces $V$. We now
state the main theorem.

Let $V$ be a projective factorial variety with at most terminal
singularities, such that $\mathop{\rm Pic} V={\mathbb Z} K_V$ and
the anticanonical class $(-K_V)$ is ample (that is, a primitive
Fano variety). We work over the ground field ${\mathbb C}$ of
complex numbers.

\begin{definition}[\emph{cf.} \protect{\cite{Pukh05}}]\label{def1}
The Fano variety $V$, described
above, is \emph{divisorially canonical} if for every effective
divisor $D\sim -nK_V$, $n\geqslant 1$, the pair $(V,
\frac{1}{n}D)$ is canonical; that is to say, for every exceptional
prime divisor $E$ over $V$ the inequality
$$
\mathop{\rm ord\,}\nolimits_E D\leqslant n\cdot a(E),
$$
where $a(E)$ is the discrepancy of $E$ with respect to $V$, holds.
\end{definition}

Apart from the divisorial canonicity, we will need the following
technical conditions.
\begin{itemize}
\item[($\star$1)] For every anticanonical divisor $R\in |-K_V|$, every prime
number $p\geqslant 2$ and any, possibly reducible, closed subset
$Y\subset V$ of codimension $\geqslant 2$ there is a non-singular
curve $N\subset V$ such that
$$
p\not|\, (N\cdot K_V),
$$
$N\cap Y=\emptyset$ and $N$ meets $R$ transversally at
non-singular points.
\item[($\star$2)] For every, possibly reducible, closed subset $Y\subset V$ of
codimension $\geqslant 2$ there is a non-singular rational curve
$N\subset V$ such that $N\cap Y=\emptyset$.
\end{itemize}

\begin{definition}
We say that a rational dominant map
$X\dashrightarrow Z$ of varieties of the same dimension is a \emph{rational Galois cover}, if the corresponding field extension
${\mathbb C} (Z)\subset {\mathbb C}(X)$ is a Galois extension. If
the corresponding Galois group is cyclic, we say that this
rational map is a \emph{rational cyclic cover}.
\end{definition}

The main result of the present paper is the following claim.

\begin{theorem}\label{th1}
Assume that the Fano variety $V$, introduced
above, is divisorially canonical and satisfies the conditions $(\star1)$
and $(\star2)$. Then there are no rational Galois covers
$X\stackrel{d:1}{\dashrightarrow} V$ with an abelian Galois group
of order $d\geqslant 2$, where $X$ is a rationally connected
variety.
\end{theorem}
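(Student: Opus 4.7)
\textit{Reduction to cyclic covers of prime degree.}
The first step is to reduce to the case of a cyclic cover of prime degree. Since $G$ is abelian of order $d \ge 2$, one can pick a prime $p \mid d$ and a subgroup $H \subset G$ of index $p$; taking the rational quotient $Y := X/H$ yields a tower $X \dashrightarrow Y \dashrightarrow V$ in which $Y \dashrightarrow V$ is a rational cyclic cover of degree $p$, and $Y$ is rationally connected because it is dominated by the rationally connected $X$. So it suffices to prove the theorem when $\sigma \colon X \dashrightarrow V$ is cyclic of prime degree $p$. I would then use condition $(\star 2)$ to exclude the unramified case: an \'etale cyclic $p$-cover is trivial on every rational curve in $V$ (as $\pi_1(\mathbb{P}^1) = 0$), and the rational curves provided by $(\star 2)$ through complements of any codimension-$\ge 2$ locus force global triviality, contradicting $p \ge 2$. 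Therefore the branch divisor $R$ is non-zero and effective, and since $\mathop{\rm Pic} V = \Z K_V$, the defining line bundle is $\mathcal{L} \cong \mathcal{O}_V(-m K_V)$ with $R \sim -m p\, K_V$ for some integer $m \ge 1$.

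\textit{Riemann--Hurwitz and the key identity.}
Next, I would take a log resolution $\pi \colon \tilde V \to V$ of the pair $(V, R)$ and form the associated cyclic $p$-cover $\tilde\sigma \colon \tilde X \to \tilde V$, which is birational to $X$ and hence still rationally connected. Writing $K_{\tilde V} = \pi^{\ast} K_V + \sum_i a(E_i)\, E_i$ and $\pi^{\ast} R = \bar R + \sum_i a_i\, E_i$, and letting $\bar B$ denote the reduced branch divisor on $\tilde V$, Riemann--Hurwitz yields $K_{\tilde X} \sim \tilde\sigma^{\ast}\bigl(K_{\tilde V} + \tfrac{p-1}{p} \bar B\bigr)$. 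Unwinding this identity, and combining it with the estimate $a_i \le m p \cdot a(E_i)$ supplied by divisorial canonicity (applied to $R$ with $n = mp$), one should arrive at
\[
K_{\tilde V} + \tfrac{p-1}{p}\bar B \;=\; \bigl(1 - m(p-1)\bigr)\, K_{\tilde V} + \sum_i d_i\, E_i, \qquad d_i \ge 0.
\]

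\textit{Case analysis and main obstacle.}
The final step is a case split on the sign of $1 - m(p-1)$. If $(m, p) \ne (1, 2)$, then $1 - m(p-1) \le -1$; rewriting the right-hand side as $(m(p-1) - 1)\pi^{\ast}(-K_V) + \sum_i c_i E_i$, the leading term is a positive multiple of the pullback of the ample class $-K_V$, and divisorial canonicity should allow one to argue that $K_{\tilde V} + \tfrac{p-1}{p}\bar B$ remains big on $\tilde V$, so that $K_{\tilde X}$ is big and $\tilde X$ is of general type --- a contradiction with rational connectedness. In the critical case $(m, p) = (1, 2)$ the coefficient $1 - m(p-1)$ vanishes and the identity simplifies to $K_{\tilde V} + \tfrac{1}{2}\bar B = \sum_i d_i E_i$, where $d_i = a(E_i) - \lfloor a_i/2 \rfloor \ge 0$; hence $K_{\tilde X}$ is effective and therefore pseudo-effective, again incompatible with rational connectedness. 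Condition $(\star 1)$ plays an auxiliary role: the non-singular curves $N$ it supplies, with $p \nmid (N \cdot K_V)$ and meeting $R$ transversally, are used to certify non-triviality of the cyclic structure on suitable slices and to keep the Riemann--Hurwitz picture in generic form. The hardest part will be the critical case $(m, p) = (1, 2)$: here $K_V + \tfrac{1}{2} R \sim 0$, so the pseudo-effectivity of $K_{\tilde X}$ is produced entirely by the exceptional correction $\sum_i d_i E_i$, and the non-negativity of each $d_i$ depends sharply on the divisorial canonicity hypothesis --- precisely the assumption on $V$ that cannot be weakened.
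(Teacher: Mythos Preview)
Your argument has two genuine gaps. First, the displayed identity
\[
K_{\tilde V}+\tfrac{p-1}{p}\bar B \;=\;(1-m(p-1))\,K_{\tilde V}+\sum_i d_iE_i,\qquad d_i\ge 0,
\]
tacitly assumes that the defining divisor $R\in|mpH|$ is reduced. For $p\ge 3$ the components of $R$ may carry multiplicities in $\{2,\dots,p-1\}$, and then the reduced branch $\bar B$ on $\tilde V$ involves the strict transform of $R_{\rm red}$, whose $H$-degree $n$ is strictly smaller than $mp$; comparing $\pi^*H$-coefficients forces $n=mp$, so the identity fails. Second, and more seriously, your bigness claim when $(m,p)\neq(1,2)$ is not justified: the expression $(m(p-1)-1)(-K_{\tilde V})+\sum_i d_iE_i$ with $d_i\ge 0$ is not obviously big (or even pseudo-effective), because $-K_{\tilde V}=\pi^*H-\sum a(E_i)E_i$ need not lie in the pseudo-effective cone after the blow-ups of a log resolution --- divisorial canonicity gives no such control. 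What one can actually extract, after correcting the identity to use $R_{\rm red}\in|nH|$ and applying divisorial canonicity to that divisor, is that $K_{\tilde V}+\tfrac{p-1}{p}\bar B$ is $\mathbb{Q}$-linearly equivalent to an \emph{effective} divisor when $n\ge 2$, namely $\tfrac{(p-1)n-p}{pn}\,\overline{R_{\rm red}}+\sum_i e_iE_i$ with $e_i=a(E_i)-\tfrac{1}{n}\mathop{\rm ord}\nolimits_{E_i}R_{\rm red}+\tfrac{p-1}{p}[p\nmid a_i]\ge 0$; this already contradicts rational connectedness, so your case split and the bigness claim are unnecessary. Finally, you never properly treat the case where the reduced branch on $V$ lies in $|H|$ (your setup with $R\sim mpH$ obscures rather than excludes it), and this is exactly where the paper invokes $(\star 1)$; calling that condition ``auxiliary'' misreads its role in the case analysis.

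For comparison, the paper does not argue with the divisor class $K_{\tilde X}$ at all. It fixes a free family of rational curves on $X$, constructs (Proposition~1) a birational modification $V^+\to V$ on which the image family of curves becomes free, and builds the cyclic cover over a large open subset $U\subset V^+$ with smooth branch divisor $W$ (Proposition~2). The contradiction is then read off by intersecting the Riemann--Hurwitz identity with a general curve $C$ of the free family: the inequality $(C\cdot K)<0$ combined with divisorial canonicity handles the case $\deg(\varphi_*W_{\rm div})\ge 2$, while $(\star 1)$ and $(\star 2)$ dispose of degrees $1$ and $0$ respectively, via an elementary branch-point count on the curve $C$. No appeal to the pseudo-effective cone or to BDPP is made.
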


Since the divisorial canonicity implies birational superrigidity,
as an immediate consequence of Theorem~\ref{th1}, we get the following
claim.

\begin{corollary}\label{cor1}
 In the assumptions of Theorem~\ref{th1}, if
$X\stackrel{d:1}{\dashrightarrow} V$ is a rational Galois cover
with an abelian Galois group, then $d=1$ and the MMP for $X$ has
the unique outcome $V$.
\end{corollary}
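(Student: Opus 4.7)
The corollary is a straightforward packaging of Theorem~\ref{th1} with the implication ``divisorial canonicity $\Rightarrow$ birational superrigidity'' alluded to in the sentence preceding the statement. I would split the argument into three short steps.

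First, I would apply Theorem~\ref{th1} directly. Let $\chi\colon X\dashrightarrow V$ be a rational Galois cover of some degree $d\geqslant 1$ with abelian Galois group, with $X$ rationally connected. The case $d\geqslant 2$ is precisely what Theorem~\ref{th1} rules out, so $d=1$; thus the induced extension of function fields is trivial and $\chi$ is a birational equivalence.

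Second, I would invoke the fact (stated in the paragraph preceding the corollary and proved e.g.\ in~\cite{Pukh05}) that a divisorially canonical primitive Fano variety $V$ is birationally superrigid. Recall that birational superrigidity means that the only Mori fibre space in the birational class of $V$ is $V$ itself (equipped with the projection to a point); equivalently, any birational map from a $\mathbb Q$-factorial terminal Mori fibre space to $V$ is a pseudo-isomorphism onto $V$.

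Third, to identify the MMP outcome, I would pass to a $\mathbb Q$-factorial terminal model $\widetilde X$ of $X$ via resolution followed by a terminalisation. Rational connectedness of $X$, hence of $\widetilde X$, implies uniruledness, so $K_{\widetilde X}$ is not pseudo-effective and any MMP run on $\widetilde X$ terminates with a Mori fibre space $X^\ast\to S$. Composing $\widetilde X\dashrightarrow X^\ast$ with $\chi$ yields a birational map $X^\ast\dashrightarrow V$, and superrigidity forces $S=\mathrm{pt}$ and $X^\ast\cong V$. Since this reasoning applies to any legitimate MMP run, the outcome is uniquely $V$, as claimed.

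The only non-trivial ingredient beyond Theorem~\ref{th1} is the implication ``divisorial canonicity $\Rightarrow$ birational superrigidity,'' which the author treats as known input; accordingly there is no genuine obstacle here, and the corollary is indeed immediate in the sense asserted. The ``hard work'' of the paper is concentrated in Theorem~\ref{th1} itself.
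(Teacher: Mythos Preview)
Your proposal is correct and matches the paper's approach exactly: the paper gives no separate proof of this corollary beyond the sentence preceding it, which says that divisorial canonicity implies birational superrigidity and that the claim is therefore an immediate consequence of Theorem~\ref{th1}. Your three steps simply unpack that sentence, and the only external input you invoke---the implication from divisorial canonicity to birational superrigidity, taken from~\cite{Pukh05}---is precisely the one the paper cites.
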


Since every rational map of degree 2 is a Galois rational cover
with the cyclic group $C_2$ as the Galois group, we obtain the
following claim.

\begin{corollary}\label{cor2}
 In the assumptions of Theorem~\ref{th1}, there are
no rational maps $X\stackrel{2:1}{\dashrightarrow} V$ of degree 2
with $X$ a rationally connected variety.
\end{corollary}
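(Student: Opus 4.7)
The plan is straightforward: Corollary~\ref{cor2} is a direct specialization of Theorem~\ref{th1} to the case $d=2$, once one verifies that every dominant rational map of degree $2$ is automatically a rational Galois cover with cyclic (hence abelian) Galois group.

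To carry this out, I would take a hypothetical dominant rational map $X \stackrel{2:1}{\dashrightarrow} V$ with $X$ rationally connected, and examine the associated function field extension $\mathbb{C}(V) \subset \mathbb{C}(X)$, which has degree $2$. Because the ground field has characteristic zero, the extension is separable; choosing a primitive element $\alpha \in \mathbb{C}(X)$ with minimal polynomial $t^{2} + bt + c$ over $\mathbb{C}(V)$, the second root $-b-\alpha$ already lies in $\mathbb{C}(X)$. Hence the extension is normal, therefore Galois, and its Galois group has order $2$, so it is isomorphic to $C_2$ and in particular abelian.

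With this observation in hand, the map $X \stackrel{2:1}{\dashrightarrow} V$ is a rational Galois cover of degree $d=2\geqslant 2$ with abelian Galois group in the sense of the preceding definition. Since $V$ is assumed to be divisorially canonical and to satisfy $(\star 1)$ and $(\star 2)$, the hypotheses of Theorem~\ref{th1} are met, and its conclusion directly excludes the existence of such a map. This yields Corollary~\ref{cor2}.

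There is no genuine obstacle in this derivation: all the substantive work is contained in Theorem~\ref{th1}, while the step specific to Corollary~\ref{cor2} reduces to the elementary fact that a degree-two extension of fields of characteristic zero is automatically Galois with cyclic Galois group. The only minor bookkeeping point is to make sure the Galois-cover formalism is invoked in the same sense as in the definition preceding Theorem~\ref{th1}, which is immediate from the identification of $\mathbb{C}(X)/\mathbb{C}(V)$ as the associated field extension of the rational map.
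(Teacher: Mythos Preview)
Your argument is correct and matches the paper's own justification exactly: the paper simply notes that every rational map of degree $2$ is a rational Galois cover with Galois group $C_2$, and then invokes Theorem~\ref{th1}. Your added detail about the primitive element and the second root merely spells out this elementary fact, so the approach is essentially identical.
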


Although a particular case of Theorem~\ref{th1}, the last corollary is
especially important as it covers \emph{all} rational maps of
degree 2 and therefore motivates the following conjecture.

\begin{conjecture}[on absolute rigidity]\label{conj1}
 If $V$ is a
divisorially canonical Fano variety, then every rational dominant
map $X\dashrightarrow V$, where $X$ is a rationally connected
variety of dimension $\mathop{\rm dim\,} V$, is a birational map.
\end{conjecture}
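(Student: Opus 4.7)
The plan is to attempt extending Theorem~\ref{th1} from the abelian Galois setting to arbitrary rational dominant maps, by combining a Galois-closure reduction with a direct adaptation of the divisorial canonicity technique. Since Conjecture~\ref{conj1} is open, this is a research programme rather than a complete argument.

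First I would try the Galois closure reduction. Given a rational dominant map $f\colon X\dashrightarrow V$ of degree $d\geqslant 2$ with $X$ rationally connected, form the Galois closure $\widetilde X\dashrightarrow V$ with finite Galois group $G$. If $\widetilde X$ were automatically rationally connected and $G$ abelian, Theorem~\ref{th1} would immediately close the argument. Rational connectedness of $\widetilde X$ looks plausible via Graber--Harris--Mazur--Starr-type principles, since $\widetilde X$ sits as a component of a fibre product of $X$ with itself over $V$. The group $G$, however, will typically not be abelian. A natural preliminary step is therefore to strengthen Theorem~\ref{th1} to arbitrary finite Galois groups; this should follow by taking a composition series of $G$ and running the proof of Theorem~\ref{th1} inductively on intermediate covers, each extension step being cyclic of prime order, with the conditions $(\star 1)$--$(\star 2)$ invoked at every stage.

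Next, to remove the Galois hypothesis altogether, I would adapt the divisorial canonicity estimate directly on the cover. Choose a resolution $\widetilde f\colon \widetilde X\to V$ with $\widetilde X$ smooth, projective and rationally connected, and analyze the pair $(V,\tfrac{1}{n}\widetilde f_\ast D)$ for $D$ an effective divisor on $\widetilde X$ whose push-forward lies in $|-nK_V|$; a natural choice is to take $D$ in the mobile part of a suitable linear system on $\widetilde X$ and to compare it with pulled-back anticanonical divisors on $V$. Divisorial canonicity controls $\mathop{\rm ord}\nolimits_E(\widetilde f_\ast D)\leqslant n\,a(E)$ for every exceptional prime $E$ over $V$; translating this back through $\widetilde f$ and invoking $(\star 1)$--$(\star 2)$ to produce curves that test ramification in high codimension, one hopes to force $d=1$. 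The curves supplied by $(\star 1)$ and $(\star 2)$ are tailor-made to detect contributions along closed subsets of codimension $\geqslant 2$, so they should drive the bootstrap from the abelian case to the general one.

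The hard part is the absence of a Galois action. In the Galois situation one has a transitive group permuting the sheets, which makes push-forward and pull-back of divisors mutually compatible and pins the branch divisor down as the image of a fixed locus. In the general case the ramification divisor on $\widetilde X$ need not be the pullback of anything on $V$, and the discrepancies $a(E)$ of exceptional divisors extracted by $\widetilde f$ can be essentially arbitrary. Controlling these discrepancies uniformly in terms of $-K_V$ is the central technical obstacle; I expect that it requires either a strengthening of the divisorial canonicity condition tailored to finite covers, or a direct multiplicity estimate for the ramification divisor analogous to the ones used in the proof of birational superrigidity for the hypersurfaces $V\subset {\mathbb P}^{M+1}$ of degree $M+1$. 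If such a strengthening holds, Conjecture~\ref{conj1} follows along the lines above; if it fails in some family, the conjecture may need refinement.
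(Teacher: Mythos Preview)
The paper does not prove Conjecture~\ref{conj1}; it is stated as an open problem motivated by Corollary~\ref{cor2}, and the only further discussion is in Remark~\ref{rk2} and Conjecture~\ref{conj2}, which concern the intermediate step of non-abelian Galois covers. You correctly recognise that the statement is open and present a programme rather than a proof, so there is no ``paper's proof'' to compare against. That said, two of your proposed steps contain gaps that are worth naming.

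First, the Galois closure reduction is more delicate than you suggest. The Galois closure $\widetilde X$ sits inside an iterated fibre product of copies of $X$ over $V$, but the maps involved are generically finite, so Graber--Harris--Starr does not apply: that theorem concerns fibrations whose general fibre is rationally connected, whereas here the general fibre is a finite set. Rational connectedness of $\widetilde X$ is, as far as I know, genuinely open and is one of the main obstacles to the conjecture. Second, your composition-series induction only produces cyclic prime-order steps when $G$ is solvable; for solvable $G$ no induction is needed at all, since $G$ already admits a normal subgroup of prime index and the paper's one-step reduction (replace $X$ by the intermediate quotient, which is rationally connected as an image of $X$) applies verbatim. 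The genuinely new case is when $G$ has no non-trivial abelian quotient, and there your scheme gives nothing. The paper's own Remark~\ref{rk2} proposes instead to run the argument of Theorem~\ref{th1} directly for arbitrary $G$, using that the ramification divisor of $U_X\to U$ is $G$-invariant and hence pulled back from $U$; this bypasses the tower entirely.

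Your second approach, adapting the divisorial canonicity estimate to a non-Galois cover, correctly isolates the real obstruction: without a transitive group action the ramification divisor on $\widetilde X$ need not be a pullback, so the Hurwitz-type formula (\ref{14.10.2019.1}) has no analogue and the passage to the inequality (\ref{14.10.2019.3}) breaks down. Your diagnosis that one would need either a strengthened canonicity hypothesis or a new multiplicity bound for the ramification divisor is accurate; the paper offers no mechanism for this, and neither does the existing literature.
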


\section{Divisorially canonical varieties}

Given that the main assumption for the variety $V$ in Theorem~\ref{th1} is divisorial
canonicity, the natural question to ask now is how typical this
property is in the class of Fano varieties? Let the symbol
${\mathbb P}$ stand for the complex projective space ${\mathbb P}^{M+1}$, where $M\geqslant 5$. Set ${\mathcal F}={\mathbb
P}(H^0({\mathbb P},{\mathcal O}_{\mathbb P}(M+1)))$ to be the space of
hypersurfaces of degree $M+1$ in ${\mathbb P}$. If a hypersurface
$V\in {\mathcal F}$ is factorial, then $\mathop{\rm Pic} V={\mathbb
Z}H$, where $H$ is the class of a hyperplane section. In
\cite{Pukh05} it was shown that a Zariski general \emph{non-singular} hypersurface $V$ is divisorially canonical. Since it
is not hard to check that the properties ($\star$1) and ($\star$2) are
satisfied for a non-singular hypersurface $V$ (this is done below
in Section~\ref{section3}), we obtain the following claim.

\begin{corollary}\label{cor3}
 For a Zariski general hypersurface
$V\subset {\mathbb P}$ of degree $M+1$, where $M\geqslant 5$,
there are no non-trivial rational Galois covers
$X\stackrel{d:1}{\dashrightarrow} V$ with an abelian Galois group
of order $d\geqslant 2$, where $X$ is a rationally connected
variety; in particular, there are no rational maps
$X\dashrightarrow V$ of degree 2 with $X$ rationally connected
\end{corollary}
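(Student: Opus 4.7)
The plan is to apply Theorem~\ref{th1} to a Zariski general hypersurface $V\subset\mathbb{P}$ of degree $M+1$, which requires verifying three things: that $V$ is a primitive Fano variety in the sense of the paper; that $V$ is divisorially canonical; and that $V$ satisfies conditions $(\star 1)$ and $(\star 2)$. Once these are established, the first assertion is immediate from Theorem~\ref{th1}, while the ``in particular'' clause follows from Corollary~\ref{cor2}, since any dominant rational map of degree $2$ corresponds to a degree-$2$ field extension, which is automatically normal and hence Galois with group $C_2$.

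The primitive Fano property is a straightforward consequence of classical facts: for $V$ smooth, the Lefschetz hyperplane theorem gives $\mathop{\rm Pic} V = \mathbb{Z}H$, adjunction gives $K_V=-H$ so that $-K_V$ is ample and generates the Picard group, and smoothness implies factoriality and the (vacuous) terminal singularity condition. The divisorial canonicity of a Zariski general non-singular hypersurface of degree $M+1$ in $\mathbb{P}^{M+1}$ with $M\geqslant 5$ is precisely the main theorem of \cite{Pukh05}, which I import as a black box; this is by far the deepest ingredient.

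The remaining work is to verify $(\star 1)$ and $(\star 2)$, which is the content of Section~\ref{section3}. These are geometric statements about the abundance and flexibility of smooth (rational) curves on a smooth hypersurface of sufficiently large dimension. For $(\star 2)$, I expect to use that $V$ is covered by an ample family of smooth rational curves (lines for $M\geqslant 5$); a dimension count together with a Bertini-type genericity argument then produces a curve in this family avoiding any prescribed closed subset $Y\subset V$ of codimension $\geqslant 2$. For $(\star 1)$, I further impose the open condition that $N$ meet a chosen anticanonical divisor $R$ transversally at smooth points; the curve can be taken to be a line, giving $(N\cdot K_V)=-1$, so coprimality with any prime $p$ is automatic.

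The main conceptual obstacle is entirely contained in \cite{Pukh05}; the corollary itself is essentially the packaging of that result together with the elementary verification of $(\star 1)$ and $(\star 2)$ for smooth hypersurfaces, and then an invocation of Theorem~\ref{th1}.
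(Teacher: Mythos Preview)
Your overall strategy is correct and matches the paper's: divisorial canonicity is imported from \cite{Pukh05}, the primitive Fano property is standard, and it remains to check $(\star 1)$ and $(\star 2)$ before invoking Theorem~\ref{th1}. However, your proposed verification of $(\star 1)$ and $(\star 2)$ via lines has a genuine gap. Lines on a smooth hypersurface $V$ of degree $M+1$ in $\mathbb{P}^{M+1}$ do \emph{not} cover $V$: the Fano scheme of lines has dimension $M-2$, so the swept-out locus is only a divisor $D\subsetneq V$. Worse, for a general hyperplane $H'\subset\mathbb{P}^{M+1}$ the set $Y=D\cap H'$ has codimension $2$ in $V$, and every line $L\subset V$, being a line in $\mathbb{P}^{M+1}$, meets $H'$ in a point lying on $D$, hence on $Y$. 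Thus for this particular $Y$ no line satisfies $N\cap Y=\emptyset$, and your argument breaks down for both conditions.

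For $(\star 2)$ the repair is immediate and is exactly what the paper does: a non-singular Fano variety carries a free family of rational curves in Koll\'ar's sense, and a general member of such a family avoids any prescribed codimension-$2$ subset. For $(\star 1)$ the paper's argument is more elaborate and genuinely different from yours. When $p\nmid\deg V=M+1$, one takes $N$ to be the section of $V$ by a general $2$-plane in $\mathbb{P}^{M+1}$: this avoids $Y$ by a dimension count in the ambient projective space, meets $R$ transversally by Bertini, and has $(N\cdot K_V)=-(M+1)$ coprime to $p$. When $p\mid\deg V$, a plane section no longer works; instead the paper starts from a general line $L\subset V$ (which may well meet $Y$), chooses a general $2$-plane $P\supset L$, and takes $N$ to be the residual curve in $P\cap V=L+N$. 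A careful choice of $P$ (avoiding tangent hyperplanes at points of $L\cap Y$ and using the projection from $L$) ensures $N\cap Y=\emptyset$, and now $\deg N=\deg V-1$ is coprime to $p$. So the single-line shortcut you propose, with $(N\cdot K_V)=-1$, does not go through; the case split on whether $p$ divides $\deg V$ and the residual-curve construction are needed.
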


Note that in \cite{Pukh15a} it was shown that for $M\geqslant 9$
there exists a Zariski open subset ${\mathcal F}_{\rm reg}\subset
{\mathcal F}$, such that every hypersurface $V\in {\mathcal F}_{\rm reg}$
has at most quadratic singularities of rank $\geqslant 8$, so is a
factorial variety with terminal singularities, and satisfies the
property of divisorial canonicity. Moreover, for the complement
${\mathcal F}\setminus{\mathcal F}_{\rm reg}$ the inequality
$$
\mathop{\rm codim\,}(({\mathcal F}\setminus {\mathcal F}_{\rm reg})\subset
{\mathcal F})\geqslant \frac{(M-6)(M-5)}{2}-5
$$
holds. The property ($\star$1) is easy to show for hypersurfaces $V\in
{\mathcal F}_{\rm reg}$, see Section~\ref{section3}.

Apart from Fano hypersurfaces of index 1, the divisorial
canonicity was also shown for Zariski general varieties in the
following families:

\begin{itemize}

\item double spaces of index 1 and dimension $\geqslant 3$,
see \cite{Pukh05},

\item a majority of the families of Fano complete intersections
of index 1 in the projective space
\cite{Pukh06b,EcklPukh2016,Pukh18a},

\item finite, not necessarily cyclic, covers of index 1 of the
projective spaces \cite{Pukh19a}.

\end{itemize}

This list is probably not complete: computing or estimating the
(log) canonical thresholds has become a popular topic, see
\cite{ChParkWon2014,ChShr2008,LiuZhuang2018,Zhuang2018} and other
works in this direction.

\section{Proof of Theorem~\ref{th1}}\label{section3}

Let us assume the converse and fix a
non-trivial rational Galois cover $\sigma\colon X\dashrightarrow
V$ with an abelian Galois group, where $V$ is a divisorially
canonical variety, satisfying ($\star$1) and ($\star$2), and $X$ is rationally
connected. Considering the field extension ${\mathbb C}(V)\subset
{\mathbb C}(X)$, we can find an intermediate field which is a
normal extension of ${\mathbb C}(V)$ with a cyclic group of a
prime order $p\geqslant 2$ as its Galois group. Since the image of
a rationally connected variety is rationally connected, we may
assume that the Galois group of the original extension ${\mathbb
C}(V)\subset {\mathbb C}(X)$ is a cyclic group of a prime order
$p\geqslant 2$. Further, we may assume that $\sigma\colon X\to V$
is a morphism and $X$ is a non-singular projective variety,
$\mathop{\rm dim\,} X = \mathop{\rm dim\,} V$.

We say that a family ${\mathcal L}$ of irreducible projective curves
on a quasi-projective variety is {\it free}, if they sweep out a
dense subset of that variety and for every subvariety $Y$ of
codimension $\geqslant 2$ the subset
$$
\{ L\in {\mathcal L}\,|\, L\cap Y\neq \emptyset\}
$$
is a proper closed subfamily of the family ${\mathcal L}$ (that is to
say, a curve $L\in {\mathcal L}$ of general position does not
intersect $Y$). Let us fix a free family ${\mathcal C}_X$ of
non-singular rational curves on $X$.

This free family of rational curves is a crucial object in our
proof. For the existence and basic properties of free families of
rational curves, our reference is Koll\'{a}r's book, \cite[Sections~II.3 and~IV.3]{Kol96}. The facts used below are well known
and standard; here we explain briefly the easiest way to
understand the basic geometry of such families. By Theorem~3.9 in
\cite[Section~IV.3]{Kol96} we have a family ${\mathcal C}_X$ of
non-singular rational curves $C_X$ on $X$ such that the vector
bundle $T_X|_{C_X}$ is ample, that is, it is of the form
$$
\oplus {\mathcal O}_{{\mathbb P}^1}(\alpha_i)
$$
with all $\alpha_i\geqslant 1$. (This implies, in particular, that
$(K_X\cdot C_X)<0$, although that inequality holds under much
weaker assumptions for the family of curves.) Therefore, the
infinitesimal deformations of the curve $C_X$ given by the
sections of the vector bundle $T_X|_{C_X}$ are unobstructed, see
Points 3.3 -- 3.5.4 in \cite[Section~II.3]{Kol96}, and if for
a particular subvariety $Y\subset X$ of codimension $\geqslant 2$
and a particular curve $C_X\in {\mathcal C}_X$ the intersection
$C_X\cap Y\neq \emptyset$, then a general deformation of $C_X$ in
the family ${\mathcal C}_X$ does not meet $Y$: we can deform the curve
away from $Y$, see Proposition~3.7 in \cite[Section~II.3]{Kol96}.

For the same reason, for every prime divisor $\Delta\subset X$,
such that $\sigma_*\colon T_pX\to T_{\sigma(p)}V$ is not an
isomorphism for a point of general position $p\in \Delta$ (this is
true, in particular, if $\mathop{\rm
codim\,}(\sigma(\Delta)\subset V)\geqslant 2$), a general curve
$C_X\in {\mathcal C}_X$ meets $\Delta$ transversally at points of
general position. (There are finitely many such divisors, so the
assumption that a general $C_X$ meets every $\Delta$ at points of
general position is justified by the family ${\mathcal C}_X$ being
free; the transversality follows from the ampleness of the
restriction of the tangent bundle $T_X$ onto $C_X$, see the
references above.)

The same deformation arguments (see the proof of Proposition~3.7
in \cite[Section~II.3]{Kol96} and Theorems 1.7 and 1.8 in
\cite[Section II.1]{Kol96} give us that for a general curve
$C_X\in {\mathcal C}_X$ the morphism
$$
\sigma|_{C_X}\colon C_X\to \sigma(C_X)
$$
is birational (again, this comes from the ampleness of $T_X$
restricted onto a general $C_X$, see Proposition 3.5 and Corollary
3.5.3 in \cite[Section II.3]{Kol96}: when we deform the curve
$C_X$, any two distinct points $p\neq q$ on this curve vary
independently, so that if for a particular curve $C_X$ the
morphism $\sigma|_{C_X}$ is not birational, choosing two points
$p\neq q$ such that $\sigma(p)=\sigma(q)$, we can see that a
general deformation of this curve satisfies the required
property.)

Since in the subsequent arguments we need only the general curve
$C_X\in {\mathcal C}_X$, we will remove from ${\mathcal C}_X$ proper
closed subsets when we need it, without special comments and
keeping the same notation ${\mathcal C}_X$. Let ${\mathcal C}_V=\sigma_*
{\mathcal C}_X$ be the image of that family on $V$. The family ${\mathcal
C}_V$ is, generally speaking, not free: if the $\sigma$-image of a
prime divisor $\Delta\subset X$ is of codimension $\geqslant 2$,
then the general curve $C_V\in {\mathcal C}_V$ meets $\sigma(\Delta)$.

\begin{proposition}\label{prop1}
There is a birational morphism
$\varphi\colon V^+\to V$, where $V^+$ is a non-singular projective
variety, such that the strict transform ${\mathcal C}_V^+$ of the
family ${\mathcal C}_V$ on $V^+$ is a free family of curves.
\end{proposition}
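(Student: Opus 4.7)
The plan is to eliminate the failure of freeness by passing to a birational modification $V^+$ of $V$ on which the images of the $\sigma$-exceptional divisors of $X$ become honest divisors; the strict transform of $\mathcal{C}_V$ will then inherit freeness from $\mathcal{C}_X$.

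First, locate the obstruction. Let $E_1,\dots,E_k\subset X$ be the (finitely many) prime divisors with $Z_i:=\sigma(E_i)$ of codimension $\geqslant 2$ in $V$. For any closed $Y\subset V$ of codimension $\geqslant 2$ not containing any $Z_i$, the preimage $\sigma^{-1}(Y)$ has codimension $\geqslant 2$ in $X$ and is avoided by a general $C_X\in\mathcal{C}_X$, whence $\sigma(C_X)$ avoids $Y$. Thus the loci $Z_i$ are the only obstructions to freeness of $\mathcal{C}_V$.

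Next, construct $\varphi\colon V^+\to V$ by a standard combination of Hironaka's resolution of indeterminacies and Raynaud--Gruson flattening, so that $V^+$ is non-singular projective, $\sigma$ factors as $\sigma=\varphi\circ\tau$ for a morphism $\tau\colon X\to V^+$, and $\tau$ \emph{contracts no divisor of $X$}. The last property means that each $E_i$ is sent by $\tau$ onto a divisor $E_i^+\subset V^+$ lying over $Z_i$: the generic finiteness of $\tau$ forces $\dim\tau(E_i)=\dim E_i=\dim V^+-1$, while $\tau(E_i)\subset\varphi^{-1}(Z_i)$, so $\tau(E_i)$ is a divisor inside the exceptional locus of $\varphi$ over $Z_i$. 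Heuristically, one reaches such a $V^+$ by successively blowing up the $Z_i$ (after resolving their singularities) and lifting $\sigma$, each round strictly reducing the number of divisors contracted by the lifted morphism.

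Finally, set $\mathcal{C}_V^+:=\tau_*\mathcal{C}_X$. Each $C_V^+=\tau(C_X)$ is the strict transform of $C_V=\sigma(C_X)$ (since $\varphi\circ\tau=\sigma$), and $\bigcup C_V^+$ is dense in $V^+$ by dominance of $\tau$. For freeness, let $Y\subset V^+$ be closed of codimension $\geqslant 2$; any divisorial component of $\tau^{-1}(Y)$ would be an exceptional divisor of $\tau$, contradicting the construction, so $\tau^{-1}(Y)$ has codimension $\geqslant 2$ in $X$. Freeness of $\mathcal{C}_X$ then gives $\tau(C_X)\cap Y=\emptyset$ for general $C_X$, as required. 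The principal technical obstacle is the construction in the second step; once $\tau$ with no exceptional divisors is obtained, the transfer of freeness along $\tau$ is automatic.
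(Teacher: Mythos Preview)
Your strategy is genuinely different from the paper's and, as written, has a real gap at the step you yourself flag as the ``principal technical obstacle''. You assert that Hironaka plus Raynaud--Gruson produces a smooth $V^+$ and a \emph{morphism} $\tau\colon X\to V^+$ from the \emph{unmodified} $X$ contracting no divisor. Neither tool gives this. Hironaka's elimination of indeterminacies resolves a rational map by blowing up the \emph{source}; Raynaud--Gruson flattening blows up the target but then replaces $X$ by its strict transform in $X\times_V V^+$, which is a genuine modification of $X$ (and is typically singular). Concretely, if you blow up a centre $Z\subset V$ and $\sigma^{-1}(Z)$ contains isolated points of the finite part of $\sigma$ (which it usually does), then $X\dashrightarrow V^+$ is undefined at those points: the universal property of blow-ups requires $\sigma^{-1}I_Z\cdot{\mathcal O}_X$ to be invertible, and there is no reason for this. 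So keeping $X$ fixed is not an option. If instead you allow replacing $X$ by a resolution $X'$, you must then show (a) that $\mathcal C_X$ stays free on $X'$ --- this is fine, since the modification is over codimension~$\geqslant 2$ --- but more seriously (b) that the procedure ``blow up $V$, resolve the graph on the $X$-side, blow up $V$ again to kill the new $\tau$-exceptional divisors, $\dots$'' terminates. You give no argument for (b), and it is not obvious.

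The paper avoids $X$ and $\sigma$ entirely. It works directly with the family $\mathcal C_V$ on $V$ and removes the failure of freeness by an induction on dimension: first blow up the finitely many points lying on \emph{every} curve of the family, iterate until no such points remain; then blow up the finitely many curves meeting every curve of the (transformed) family, iterate; and so on through subvarieties of each dimension up to $M-2$. Termination at each stage follows from an elementary local argument (formal branches at the blown-up centre), and the output $V^+=V_{M-1}$ carries a free strict transform by construction. This bypasses completely the flattening/equidimensionality problem that your approach runs into.
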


\begin{proof}
It is given in Section~\ref{section4}. (Here of course the strict
transform of a family of irreducible curves is the family of
irreducible curves, a general curve in which is the strict
transform of a general curve in the original family. The
parameterizing space of the new family is, generally speaking, a
Zariski open subset of the parameterizing space of the original
family.)
\end{proof}

\begin{proposition}\label{prop2}
There is a non-singular quasi-projective
variety $U_X$, a birational map $\varphi_X\colon
U_X\dashrightarrow X$ and a Zariski open subset $U\subset V^+$,
such that:
\begin{itemize}
\item[{\rm (i)}] the rational map
$$
\sigma_*=\varphi^{-1}\circ\sigma\circ\varphi_X\colon
U_X\dashrightarrow V^+
$$
extends to a morphism $\sigma_U\colon U_X\to V^+$, the image of
which is $U$,
\item[{\rm (ii)}]  the inequality
$$
\mathop{\rm codim\,}((V^+\setminus U)\subset V^+)\geqslant 2
$$
holds,
\item[{\rm (iii)}]  the map $\sigma_U\colon U_X\to U$ is a cyclic cover of
order $p$, branched over a non-singular hypersurface $W\subset
U$.
\end{itemize}
\end{proposition}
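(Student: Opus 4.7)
The plan is to construct $U_X$ and $\sigma_U$ by normalizing $V^+$ inside the function field $\mathbb{C}(X)$. Let $X^+$ denote the normalization of $V^+$ in $\mathbb{C}(X)$: this yields a finite morphism $\widetilde\sigma \colon X^+ \to V^+$ of degree $p$ on which the cyclic Galois group $G = \mathbb{Z}/p\mathbb{Z}$ acts, with quotient $X^+/G = V^+$ (since $\mathbb{C}(X)^G = \mathbb{C}(V) = \mathbb{C}(V^+)$ and $V^+$ is normal). The relative Spec descriptions of $X$ and $X^+$ then give a canonical birational $G$-equivariant morphism $\psi \colon X^+ \to X$ satisfying $\sigma \circ \psi = \varphi \circ \widetilde\sigma$.

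Next I would analyze $\widetilde\sigma$ via purity and Kummer theory. Since $V^+$ is smooth by Proposition~\ref{prop1} and $X^+$ is normal, Zariski--Nagata purity forces the branch locus $W^+ \subset V^+$ to be pure of codimension one. Because $p$ is prime, cyclic Kummer theory supplies a local normal form: Zariski-locally near any point of $V^+$ with local equation $f$ for $W^+$, the cover is given by $z^p = f$ (the general Kummer equation $z^p = u \cdot f^k$ with $1 \leq k \leq p-1$ reduces to this after raising $z$ to the inverse of $k$ modulo $p$ and absorbing the resulting unit into $f$). From this description, $X^+$ is smooth above $v \in V^+$ if and only if either $v \notin W^+$ or $v$ is a smooth point of $W^+$.

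I would then set $Z := \mathop{\rm Sing\,}(W^+)$, which includes the pairwise intersections of distinct components of $W^+$ and has $\mathop{\rm codim\,}(Z \subset V^+) \geq 2$, and take $U := V^+ \setminus Z$, $U_X := \widetilde\sigma^{-1}(U)$, $W := W^+ \cap U$, and $\varphi_X := \psi|_{U_X}$. By the preceding singularity analysis, $U_X$ is non-singular and quasi-projective, $W$ is a non-singular hypersurface in $U$, and $\sigma_U := \widetilde\sigma|_{U_X} \colon U_X \to U$ is a cyclic cover of order $p$ branched over $W$, giving (ii) and (iii). Condition (i) is an immediate consequence of the identity $\sigma \circ \psi = \varphi \circ \widetilde\sigma$ combined with the birationality of $\varphi$.

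The \emph{main technical obstacle} is to exclude the degenerate case $W^+ = \emptyset$, in which $\widetilde\sigma$ would be an everywhere \'{e}tale non-trivial cover. This is precluded because $V^+$, being smooth, projective, and birational to the rationally connected variety $V$, is itself rationally connected, and every smooth projective rationally connected variety over $\mathbb{C}$ is simply connected (Campana; Koll\'{a}r--Miyaoka--Mori); since $X^+$ is connected (being birational to $X$), no such non-trivial \'{e}tale cover can exist. With $W^+ \neq \emptyset$ secured, the remaining verifications are formal consequences of the Kummer-theoretic local description of cyclic covers of prime degree.
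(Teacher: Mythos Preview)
Your argument is correct and takes a genuinely different, more conceptual route than the paper. The paper works by hand: it writes the extension explicitly as $\xi^p=a_0/a_1$, realizes the cover as a hypersurface $\{a_1x_1^p-a_0x_0^p=0\}$ inside $V^+\times\mathbb{P}^1$, removes codimension-$2$ loci to make this hypersurface irreducible and finite over an open set $U$, and then performs a sequence of elementary transformations of the ambient $\mathbb{P}^1$-bundle to push $\mathop{\rm ord}_T a_0$ below $p$ along each bad divisor $T$, finishing with a normalization to kill the residual cuspidal singularities $t^p=s^{l_T}$. Your approach replaces all of this by taking the integral closure $X^+$ of $V^+$ in $\mathbb{C}(X)$ at the outset, invoking Zariski--Nagata purity to see that the branch locus is divisorial, and then using the local Kummer normal form $w^p=(\text{unit})\cdot f$ (your exponent-inversion trick is exactly the computation showing that the normalization of $A[\xi]/(\xi^p-uf^k)$ is $A[w]/(w^p-u^{k'}f)$) to conclude smoothness of $X^+$ over the smooth locus of the reduced branch divisor. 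This is cleaner and explains conceptually why the paper's elementary transformations and subsequent normalization terminate where they do.

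Two minor remarks. First, your map $\psi\colon X^+\to X$ need not be a \emph{morphism}; normalization in a function field gives only a birational map to an arbitrary model of that field. This is harmless, since the proposition only asks for a birational $\varphi_X$. Second, the case $W^+=\emptyset$ is not actually an obstacle to Proposition~\ref{prop2}: an \'etale cyclic cover is still a cyclic cover ``branched over the empty hypersurface,'' and indeed the paper's proof of Theorem~\ref{th1} handles the possibility that the divisorial part of $W$ is trivial separately via condition $(\star 2)$. Your simple-connectedness argument is correct, but it belongs to the proof of the theorem rather than to this proposition.
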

\noindent The proof is given in Section~\ref{section5}.

\vskip\baselineskip
To make the statement of Proposition~\ref{prop2} more visual, we arrange the
maps into the following commutative diagram:
$$
\begin{array}{rcccccl}
   &   U_X &    & \stackrel{\varphi_X}{\dashrightarrow} &   & X &
\\
\sigma_U\!\!\! & \downarrow &  &  &  &  \downarrow & \!\!\! \sigma
\\
   &   U   & \subset & V^+ & \stackrel{\varphi}{\to} & V. &
\end{array}
$$

Assuming Propositions~\ref{prop1} and~\ref{prop2}, let us complete the proof of
Theorem~\ref{th1}. By the inequality (ii) of Proposition~\ref{prop2} a general curve
$C\in {\mathcal C}^+_V$ does not meet the closed set $V^+\setminus U$
and for that reason is contained entirely in $U$. Therefore, for
some open subfamily ${\mathcal C}\subset {\mathcal C}^+_V$ all curves
$C\in {\mathcal C}$ are entirely contained in $U$, so that ${\mathcal C}$
is a family of irreducible projective rational curves sweeping out
$U$. By the construction of the family ${\mathcal C}$ for a general
curve $C\in {\mathcal C}$ its preimage
$$
\sigma^{-1}_U(C)=C_1\cup C_2\cup \cdots \cup C_p
$$
is a union of $p$ distinct rational curves on the quasi-projective
variety $U_X$. Indeed, since for a general curve $C_X\in {\mathcal
C}_X$ the morphism $\sigma|_{C_X}$ is birational, and by
construction of the family ${\mathcal C}$, we may assume that $C_1$
belongs to a family of irreducible rational curves sweeping out
$U_X$ and the morphism $C_1\to C$ is birational. Then the other
curves $C_i$, $i\neq 1$, are the images of $C_1$ under the action
of elements of the cyclic Galois group; in particular, they also
belong to families of irreducible rational curves sweeping out
$U_X$, so that
$$
(C_{i}\cdot K_U)<0,
$$
$i=1,\dots, p$, where $K_U$ is the canonical class of the variety
$U_X$. Write
$$
K^+_V=-\varphi^* H+\sum_{i\in I}a_i E_i
$$
for the canonical class of the variety $V^+$, where $E_i\subset
V^+$ are all the prime $\varphi$-exceptional divisors and $a_i>0$
are their discrepancies with respect to $V$ and $H=-K_V$ is the
ample anticanonical generator of $\mathop{\rm Pic} V$. (The
notation $K^+_V$ looks better than the standard symbol $K_{V^+}$.)
Denoting the restrictions of the divisorial classes onto $U$ by
the same symbols and omitting the symbol $\varphi^*$, we get
\begin{equation}\label{14.10.2019.1}
K_U=\sigma^*_U\left(K^+_V+\left(1-\frac{1}{p}\right)W\right),
\end{equation}
where $W\subset U$ is a non-singular hypersurface, over which the
cyclic cover $\sigma_U$ is branched. Collecting separately the
components of the hypersurface $W$, which are divisorial on $V$
and $\varphi$-exceptional, write
$$
W=W_{\rm div}+W_{\rm exc},
$$
where $W_{\rm div}=nH-\sum_{i\in I}b_i E_i$ with $b_i\in {\mathbb
Z}_+$ and $W_{\rm exc}=\sum_{i\in I}c_i E_i$ with $c_i\in\{0,1\}$.
Obviously, the inequalities $(C\cdot K^+_V)<0$, $(C\cdot W_{\rm
div})\geqslant 0$ and $(C\cdot W_{\rm exc})\geqslant 0$ hold. At
the same time,
\begin{equation}\label{14.10.2019.2}
(C_{i}\cdot K_U)=(C\cdot K^+_V)+\left(1-\frac{1}{p}\right)(C\cdot
W)<0.
\end{equation}
Assume first that $n\geqslant 2$. Adding to the left hand side of
(\ref{14.10.2019.2}) the non-positive expression
$[\left(1-\frac{1}{p}\right)n-1](C\cdot K^+_V)$, we obtain the
inequality
\begin{equation}\label{14.10.2019.3}
n(C\cdot K^+_V)+(C\cdot W)=\left(C\cdot\sum_{i\in I}(na_i-b_i+c_i)
E_i\right)<0,
\end{equation}
so that for some $i\in I$ we have
$$
b_i>na_i+c_i\geqslant n\cdot a_i
$$
and the pair $(V,\frac{1}{n}\varphi_* W_{\rm div})$ with
$\varphi_* W_{\rm div}\sim nH$ is not canonical, which contradicts
the divisorial canonicity of the variety $V$. Therefore, $n=1$ or
$0$.

The case $n=1$ is impossible: in that case $\varphi_* W_{\rm
div}=\varphi(W_{\rm div})\subset V$ is an anticanonical divisor on
the variety $V$ and we apply the assumption ($\star$1) for the prime
divisor $R=\varphi(W_{\rm div})$: we take a non-singular curve $N$
(of arbitrary genus) such that $p\not| (N\cdot K_V)$, which does
not meet the set
\begin{equation}\label{09.10.2019.1}
\varphi(V^+\setminus U)\cup
\varphi\left(\mathop{\bigcup}\limits_{i\in I}E_i\right)
\end{equation}
and meets the divisor $R$ transversally at points of general
position. Its strict transform $N^+$ is contained entirely in $U$,
and moreover,
$$
\sigma^{-1}_U(N^+)\to N^+
$$
is a cyclic cover of a non-singular curve, branched over a set of
$(N\cdot R)$ points, the number of which is not divisible by $p$,
which is impossible.

The case $n=0$ is also impossible. Here we apply the assumption
($\star$2), taking on the variety $V$ a non-singular rational curve $N$,
which does not meet the set (\ref{09.10.2019.1}). Then the strict
transform $N^+\subset V^+$ is contained entirely in $U$, so that
$$
\sigma^{-1}_U(N^+)\to N^+
$$
is a non-ramified $p$-cyclic cover of a non-singular rational
curve, which is impossible. \qed

\begin{proof}[Proof of Corollary~\ref{cor3}]
Given that Zariski general
hypersurfaces $V\in {\mathcal F}$ were shown in \cite{Pukh05} to be
divisorially canonical, it remains to check that ($\star$1) and ($\star$2)
hold for a general hypersurface $V$. In fact, they are satisfied
for any non-singular hypersurface $V\subset {\mathbb P}$ of degree
$(M+1)$: ($\star$2) is true because $V$ is a non-singular Fano variety,
and for ($\star$1) we have the following simple argument. If
$p\mathbin{\not|} \mathop{\rm deg} V$, then a section of $V$ by a
general 2-plane in ${\mathbb P}$ does the job. Let us assume that
$p\mathbin{|} \mathop{\rm deg} V$.

A general line $L\subset V$ meets the hyperplane section $R$
transversally at one point. However, it might happen that $L\cap
Y\neq \emptyset$. In any case, since the lines on $V$ sweep out a
divisor, we may assume that $L\not\subset Y$,so that $L\cap Y$ is
a finite set of points.

Now take a general plane $P\subset {\mathbb P}$ containing $L$,
and let $N\subset P$ be the residual curve of the intersection
$P\cap V=L+N$. Making sure that $P$ is not contained in the
hyperplanes in ${\mathbb P}$ that are tangent to $V$ at the points
in $L\cap Y$ and taking into account that the image of $Y$ with
respect to the projection from the line $L$ is a proper closed
subset of ${\mathbb P}^{M-1}$, we may assume that $N\cap
Y=\emptyset$. As
$$
\mathop{\rm deg} N = \mathop{\rm deg} V -1
$$
and $p\mathbin{|} \mathop{\rm deg} V$, the curve $N$ is what we
need.
\end{proof}

\begin{remark}\label{rk1}
The property ($\star$1) is easy to show for singular
hypersurfaces $V\in {\mathcal F}_{\rm reg}$, considered in
\cite{Pukh15a}. Recall \cite[Section 3, Subsection 3.2]{Pukh15a}
that the conditions defining the open set ${\mathcal F}_{\rm reg}$,
include the following bound for the singularities of $V$: the
hypersurface may have at most quadratic singularities of rank
$\geqslant 8$, so that, in particular,
$$
\mathop{\rm codim\,} (\mathop{\rm Sing} V\subset V)\geqslant 7.
$$
Therefore, a general 2-plane in ${\mathbb P}$ does not meet the
closed set $\mathop{\rm Sing} V$. Furthermore, the regularity
conditions at every singular point $o\in \mathop{\rm Sing} V$
ensure that there are finitely many lines through the point $o$ on
$V$. Therefore, a general line $L\subset V$ does not meet the
singular locus of $V$ and the proof of ($\star$1), given above for a
non-singular hypersurface $V$, works for any $V\in {\mathcal F}_{\rm
reg}$ word for word.
\end{remark}

\section{Resolution of a family of curves}\label{section4}

In order to prove Proposition~\ref{prop1}, let us construct a sequence of birational morphisms
\begin{equation}\label{08.10.2019.1}
V_0\stackrel{\varphi_1}{\longleftarrow}
V_1\stackrel{\varphi_2}{\longleftarrow}\cdots
\stackrel{\varphi_{M-1}}{\longleftarrow} V_{M-1}
\end{equation}
of non-singular projective varieties, such that $V_0=V$, ${\mathcal
C}_0={\mathcal C}_V$ and for every closed subset $Y\subset V_i$ of
dimension $\leqslant i-1$ a general curve $C_i\in {\mathcal C}_i$ of
the strict transform of the family ${\mathcal C}_V$ on $V_i$ does not
meet the subset $Y$.

For $i=0$ the last claim holds in a trivial way. We will explain
in detail the first two steps of this construction: the morphisms
$\varphi_1$ and $\varphi_2$. The general step $\varphi_j\colon
V_j\to V_{j-1}$ is very similar to $\varphi_2$ and will be easy to
understand when $\varphi_2$ is clear.

{\bf Step 1.} Let us consider the family ${\mathcal C}_0={\mathcal C}_V$
of curves, sweeping out the variety $V_0$. It is clear that the
set of points $\Xi_0\subset V_0$, which are contained in all
curves of the family ${\mathcal C}_0$, is finite. We blow up this
finite set of points:
$$
\varphi_{1,1}\colon V_{0,1}\to V_{0,0}=V_0,
$$
and look at the strict transform ${\mathcal C}_{0,1}$ of the family
${\mathcal C}_{0}$ on $V_{0,1}$. If there are no points that lie on
all curves in ${\mathcal C}_{0,1}$, we stop. Otherwise, there is a
finite set $\Xi_{0,1}\subset V_{0,1}$ of such points (obviously,
$\Xi_{0,1}$ is contained in the exceptional divisor of
$\varphi_{1,1}$) and we blow it up. Repeating, if necessary, we
get a finite sequence of blow ups of finite sets of points,
$$
V_0=V_{0,0}\stackrel{\varphi_{1,1}}{\longleftarrow}
V_{0,1}\stackrel{\varphi_{1,2}}{\longleftarrow}\cdots
\stackrel{\varphi_{1,e(1)}}{\longleftarrow} V_{0,e(1)},
$$
such that for every point $p\in V_{0,e(1)}$ the curves
$C_{0,e(1)}\in {\mathcal C}_{0,e(1)}$ (the last symbol means the
strict transform of the family ${\mathcal C}_0$ on $V_{0,e(1)}$),
containing the point $p$, form a proper closed subset of the
family ${\mathcal C}_{0,e(1)}$. (That our procedure can not be
infinite and must terminate, follows from considering some formal
parameterizations of all branches of a general curve $C_0\in {\mathcal
C}_0$ at the points of the set $\Xi_0$.) Set $V_1=V_{0,e(1)}$ and
$$
\varphi_1=\varphi_{1,1}\circ\cdots\circ \varphi_{1,e(1)}\colon
V_1\to V_0.
$$

{\bf Step 2.} For the next step of our construction, consider the
set of curves $\Xi_1\subset V_1$, which intersect all curves of
the family ${\mathcal C}_1$. This is a finite set of curves. Indeed,
embedding the quasi-projective variety, parameterizing the curves
of the family ${\mathcal C}_1$, in some projective space, and
intersecting with the appropriate number of general hyperplanes,
we obtain a sufficiently mobile family of curves in that
quasi-projective variety, which gives us a family of surfaces
${\mathcal S}_1$ on $V_1$, such every surface $S_1\in {\mathcal S}_1$
contains all curves of the set $\Xi_1$. Now it is clear that the
set $\Xi_1$ is finite.

If $D_1$ is a general divisor of a very ample system on $V_1$,
then, intersecting the surfaces $S_1\in {\mathcal S}_1$ with $D_1$, we
obtain a family of curves on $D_1$, each of which contains the
points of the set $\Xi_1\cap D_1$. Now, arguing as at the previous
step, for the family of curves $(S_1\cap D_1, S_1\in {\mathcal S}_1)$,
we see that there is a finite sequence of birational morphisms
$$
V_1=V_{1,0}\stackrel{\varphi_{2,1}}{\longleftarrow}
V_{1,1}\stackrel{\varphi_{2,2}}{\longleftarrow}\cdots
\stackrel{\varphi_{2,e(2)}}{\longleftarrow} V_{1,e(2)},
$$
where $\varphi_{2,1}$ is a composition of two birational maps:

\begin{itemize}

\item a desingularization of the (reducible) 1-dimensional subset
$\Xi_1$, that is, a composition of finitely many blow ups of
points, transforming $\Xi_1$ into a disjoint union of non-singular
curves, and

\item the blow up of the non-singular strict transform of $\Xi_1$,

\end{itemize}

\noindent and, similarly, $\varphi_{2,i}$ first resolves the
singularities of the 1-dimensional set $\Xi_{1,i-1}\subset
V_{1,i-1}$, which is the union of all irreducible curves on
$V_{1,i-1}$, intersecting all curves of the family ${\mathcal
C}_{1,i-1}$ (the strict transform of the family ${\mathcal C}_1$ on
$V_{1,i-1}$), and then blows up the non-singular strict transform
of the reducible curve $\Xi_{1,i-1}$. Note that by construction
the image of every curve in $\Xi_{1,i-1}$ on $V_1$ is one of the
curves of the set $\Xi_1$ (otherwise, all curves of the family
${\mathcal C}_1$ would have passed through some point, which is not
true). Our procedure terminates for the same reason as at Step 1
(looking at the family of curves $(S_1\cap D_1, S_1\in {\mathcal
S}_1)$ that was used above). Finally, on $V_{1,e(2)}$ there are no
curves meeting all curves of the family ${\mathcal C}_{1,e(2)}$. We
set $V_2=V_{1,e(2)}$, ${\mathcal C}_2={\mathcal C}_{1,e(2)}$ and
$$
\varphi_2=\varphi_{2,1}\circ\cdots\circ \varphi_{2,e(2)}\colon
V_2\to V_1.
$$
Carrying on in the same spirit, we construct the whole sequence
(\ref{08.10.2019.1}). Let us consider briefly the general step of
our construction.

{\bf Step $j$.} By the previous arguments, if an irreducible
subvariety intersects all curves of the family ${\mathcal C}_{j-1}$ on
$V_{j-1}$, it has dimension $\geqslant j-1$. If there are no
irreducible subvarieties of dimension $j-1$, intersecting all
curves in ${\mathcal C}_{j-1}$, then there is nothing to do: we set
$V_j=V_{j-1}$ and $\varphi_j$ is the identity morphism. Otherwise,
let $\Xi_{j-1}\subset V_{j-1}$ be the union of all irreducible
subvarieties of dimension $j-1$ meeting all curves in ${\mathcal
C}_{j-1}$. Embedding the quasi-projective variety, parameterizing
the curves of the family ${\mathcal C}_{j-1}$ in a projective space
and intersecting it with the appropriate number of general
hyperplanes, we obtain a sufficiently mobile family of
subvarieties of dimension $j-1$ in that quasi-projective variety,
which gives us a family ${\mathcal S}_{j-1}$ of irreducible
subvarieties of dimension $j$ on $V_{j-1}$, such that every
subvariety $S_{j-1}\in{\mathcal S}_{j-1}$ contains $\Xi_{j-1}$; in
particular, $\Xi_{j-1}$ is a Zariski closed subset, and hence it
contains finitely many irreducible components. If
$$
D_1,\quad\dots,\quad D_{j-1}
$$
is a general tuple of divisors in a very ample linear system on
$V_{j-1}$, then, intersecting the subvarieties $S_{j-1}\in{\mathcal
S}_{j-1}$ with $D_1\cap\dots\cap D_{j-1}$, we obtain a family of
curves on $D_1\cap\dots\cap D_{j-1}$, each of which contains the
points of the set $\Xi_{j-1}\cap D_1\cap\dots\cap D_{j-1}$. Now,
arguing as at Step 2, we see that there is a finite sequence of
birational morphisms
$$
V_{j-1}=V_{j-1,0}\stackrel{\varphi_{j,1}}{\longleftarrow}
V_{j-1,1}\stackrel{\varphi_{j,2}}{\longleftarrow}\cdots
\stackrel{\varphi_{j,e(j)}}{\longleftarrow} V_{j-1,e(j)},
$$
where $\varphi_{j,1}$ is a composition of two birational maps:

\begin{itemize}

\item a desingularization of the (reducible) $(j-1)$-dimensional
closed subset $\Xi_{j-1}$, that is, a composition of finitely many
blow ups of subvarieties of dimension $\leqslant j-2$,
transforming $\Xi_{j-1}$ into a disjoint union of non-singular
subvarieties of dimension $j-1$, and

\item the blow up of the non-singular strict transform of
$\Xi_{j-1}$.

\end{itemize}

\noindent Similarly, $\varphi_{j,i}$ first resolves the
singularities of the closed set $\Xi_{j-1,i-1}\subset
V_{j-1,i-1}$, which is the union of all irreducible subvarieties
on $V_{j-1,i-1}$, intersecting all curves of the family ${\mathcal
C}_{j-1,i-1}$ (the strict transform of the family ${\mathcal C}_{j-1}$
on $V_{j-1,i-1}$), and then blows up the non-singular strict
transform of the closed set $\Xi_{j-1,i-1}$. Again, as at Step 2,
the image of every component of the closed set $\Xi_{j-1,i-1}$ on
$V_{j-1}$ is one of the components of the set $\Xi_{j-1}$ ---
otherwise, every curve in ${\mathcal C}_{j-1}$ would have met some
irreducible subvariety of dimension $\leqslant j-2$, which is not
true by the previous steps of our construction. Our procedure
terminates for the same reason as at Steps 1 and 2 (using the
family of curves
$$
(S_{j-1}\cap D_1\cap\dots\cap D_{j-1},S_{j-1}\in {\mathcal S}_{j-1})
$$
on $D_1\cap\dots\cap D_{j-1}$). Finally, on $V_{j-1, e(j)}$ there
are no subvarieties of dimension $\leq j-1$, meeting all curves of
the family ${\mathcal C}_{j-1, e(j)}$. We set $V_j=V_{j-1, e(j)}$,
${\mathcal C}_j={\mathcal C}_{j-1, e(j)}$ and
$$
\varphi_j=\varphi_{j,1}\circ\cdots\circ \varphi_{j,e(j)}\colon
V_j\to V_{j-1}.
$$

Having constructed the sequence (\ref{08.10.2019.1}), set
$V^+=V_{M-1}$ and
$$
\varphi=\varphi_0\circ\varphi_1\circ\cdots\circ\varphi_{M-1}\colon
V^+\to V.
$$
Obviously, $V^+$ has the property, described in the statement of
the proposition. \qed

\section{Construction of the cyclic cover}\label{section5}

Let us show Proposition~\ref{prop2}. The field extension ${\mathbb C} (V)\subset {\mathbb C}(X)$ is
generated by some element $\xi\in {\mathbb C}(X)$, satisfying the
equation
$$
\xi^p-q=0
$$
for some rational function $q\in {\mathbb C} (V)={\mathbb C}
(V^+)$. For some effective divisor $R$ on $V^+$ and sections
$a_0,a_1\in {\mathcal O}_{V^+}(R)$ we can write $q=a_0/a_1$. Consider
the hypersurface
$$
\{G=0\}\subset V^+\times {\mathbb P}^1_{(x_0:x_1)},
$$
where
$$
G=a_1x_1^p-a_0x_0^p\in H^0(V^+\times {\mathbb P}^1,\mathop{\rm
pr}\nolimits^*_V {\mathcal O}_{V^+}(R)\otimes \mathop{\rm
pr}\nolimits^*_{\mathbb P} {\mathcal O}_{{\mathbb P}^1}(p)),
$$
the symbols $\mathop{\rm pr}\nolimits_V$ and $\mathop{\rm
pr}\nolimits_{\mathbb P}$ mean the projections of $V^+\times
{\mathbb P}^1$ onto the first and second factor, respectively.
This hypersurface is, generally speaking, reducible, but has a
unique irreducible component $X_0$, such that $\mathop{\rm
pr}\nolimits_V (X_0)=V^+$.\vspace{0.1cm}

If for some prime divisor $T\subset V^+$ the hypersurface
$\{G=0\}$ contains $\mathop{\rm pr}\nolimits_V^{-1}(T)$, then
$a_0,a_1|_T\equiv 0$, so that, replacing $R$ by $R-mT$, where
$$
m=\min \{\mathop{\rm ord}\nolimits_T(a_0), \mathop{\rm
ord}\nolimits_T(a_1)\},
$$
and $a_0,a_1\in {\mathcal O}_{V^+}(R)$ by
$$
\frac{a_0}{s^m_T}, \frac{a_1}{s^m_T}\in {\mathcal O}_{V^+}(R-mT),
$$
where $s_T\in {\mathcal O}_{V^+}(T)$ is the section, corresponding to
the divisor $T$, we remove the component $\mathop{\rm
pr}\nolimits_V^{-1}(T)$. Therefore, we may assume from the
beginning that the sections $a_0,a_1\in {\mathcal O}_{V^+}(R)$ do not
vanish simultaneously on any prime divisor $T\subset V^+$ and
$\{G=0\}=X_0\subset V^+\times {\mathbb P}^1$ is an irreducible
hypersurface, by construction birational to the original variety
$X$.\vspace{0.1cm}

Now let us consider the singularities of the variety $X_0$. Assume
that for some prime divisor $T\subset V^+$ there is a subvariety
$T_X\subset \mathop{\rm Sing} X_0$, such that
$$
\mathop{\rm pr}\nolimits_V (T_X)=T.
$$
Let ${\mathcal T}$ be the set of all prime divisors on $V^+$ with that
property. By what was said above, we may assume that say,
$a_1|_T\not\equiv 0$, which implies that
$$
\mathop{\rm ord}\nolimits_T a_0\geqslant 2.
$$
Removing the set of common zeros of the sections $a_0,a_1\in {\mathcal
O}_{V^+}(R)$, the pairwise intersections of the divisors $T\in
{\mathcal T}$ (if $\sharp {\mathcal T}\geqslant 2$) and the sets of
singular points $\mathop{\rm Sing} T$ for all $T\in {\mathcal T}$, we
obtain a Zariski open set $U\subset V^+$, such that

\begin{itemize}

\item $\mathop{\rm codim\,} ((V^+\setminus U)\subset V^+)\geqslant 2$,

\item the morphism $X_0\cap \mathop{\rm pr}\nolimits_V^{-1} (U)\to U$ is
a finite morphism of degree p and

\item for every divisor $T\in {\mathcal T}$ the quasi-projective varieties
$T\cap U$ and $T_X\cap \mathop{\rm pr}\nolimits_V^{-1} (U)$ are
non-singular, and moreover, the projection $\mathop{\rm
pr}\nolimits_V$ gives an isomorphism of these varieties, which we
for simplicity of notations write down again as $T$ and $T_X$.

\end{itemize}

Now distinct varieties in ${\mathcal T}$ are disjoint.

Furthermore, we may assume that for every $T\in {\mathcal T}$ the
section $a_i$, $i=0,1$, which does not vanish identically on $T$,
is everywhere non-zero on $T\cap U$, so that if say
$a_1|_T\not\equiv 0$, then the hypersurface $X_0\cap \mathop{\rm
pr}\nolimits_V^{-1} (U)$ over a neighborhood of the divisor $T\cap
U$ is contained in the open subset
$$
\{x_0\neq 0\}=U\times {\mathbb A}^1_z,
$$
with $z=x_1/x_0$ and given by the equation
$$
a_1z^p-a_0=0,
$$
so that the corresponding subvariety $T_X\subset \mathop{\rm Sing}
X_0$ over $T\cap U$ is given in $T\times {\mathbb A}^1_z$ by the
equation $z=0$.

Thus we have constructed a locally trivial ${\mathbb P}^1$-bundle
${\mathcal X}_1$ over a non-singular quasi-projective variety $U$ (of
course, ${\mathcal X}_1=U\times {\mathbb P}^1$) and a hypersurface
$X_1=X_0\cap \mathop{\rm pr}\nolimits_V^{-1} (U)\subset {\mathcal
X}_1$ which is a cyclic cover of the prime order $p$ over $U$. The
projection ${\mathcal X}_1\to U$ will be denoted by the symbol
$\pi_1$. The cyclic cover $X_1\to U$ has all properties required
in Proposition~\ref{prop2} except for $X_1$ being non-singular. The rest of
our proof of the proposition is removing the singularities of
$X_1$. This has to be done carefully, preserving the cyclic cover.

We desingularize the covering variety in two steps. The first step
is constructing a sequence of locally trivial ${\mathbb
P}^1$-bundles over $U$:
$$
{\mathcal X}_1\stackrel{\beta_1}{\longleftarrow} {\mathcal
X}_2\stackrel{\beta_2}{\longleftarrow}\cdots
\stackrel{\beta_{k-1}}{\longleftarrow} {\mathcal X}_{k},
$$
where $\beta_i$ is an elementary birational transformation over
$U$,
$$
\begin{array}{rcccl}
   & {\mathcal X}_{i+1} & \stackrel{\beta_i}{\longrightarrow} & {\mathcal
   X}_i  &   \\
\pi_{i+1}\!\!\!\! & \downarrow &   & \downarrow & \!\!\!\! \pi_i \\
   & U & = & U , &
\end{array}
$$
defined in the following way. Assume that the strict transform
$X_i\subset {\mathcal X}_i$ of the hypersurface $X_1$ is singular
along a subvariety $\pi^{-1}_i (T)\cap X_i=T_i$ for some $T\in
{\mathcal T}$. With respect to a certain trivialization of the
${\mathbb P}^1$-bundle ${\mathcal X}_i/U$ over an open subset,
intersecting the divisor $T$, the hypersurface $X_i$ is given by
the equation
$$
a_{i,1}x_1^p-a_{i,0}x_0^p=0,
$$
where, say, $a_{i,1}|_T\not\equiv 0$ and $\mathop{\rm
ord}\nolimits_T a_{i,0}\geqslant 2$. Assume, furthermore, that in
fact the inequality
$$
\mathop{\rm ord}\nolimits_T a_{i,0}\geqslant p
$$
holds. Then the birational transformation $\beta_i\colon {\mathcal
X}_{i+1}\to {\mathcal X}_{i}$ is the composition of the blow up of the
subvariety $T_i$ (by construction, this non-singular subvariety is
a section of the ${\mathbb P}^1$-bundle $\pi_i^{-1}(T)\to T$) and
subsequent contraction of the strict transform of the hypersurface
$\pi_i^{-1}(T)$. Elementary computa\-tions with local parameters,
using the explicit presentation described above, show that locally
in a neighborhood of the generic point of the divisor $T$ the
hypersurface $X_{i+1}$ is given by the equation
$$
a_{i+1,1}x_1^p-a_{i+1,0}x_0^p=0,
$$
where $a_{i+1,1}|_T\not\equiv 0$ and $\mathop{\rm ord}\nolimits_T
a_{i+1,0}=\mathop{\rm ord}\nolimits_T a_{i,0} - p$. Therefore,
after finitely many elementary transformations of the ambient
${\mathbb P}^1$-bundle we obtain a locally trivial ${\mathbb
P}^1$-bundle
$$
\pi_k\colon {\mathcal X}_k\to U,
$$
such that if the strict transform $X_k\subset {\mathcal X}_k$ of the
hypersurface $X_1$ is singular along a subvariety
$\pi_k^{-1}(T)\cap X_k=\widetilde{T}$ for a $T\in {\mathcal T}$, then
over the general point of $T$ the hypersurface $X_k\subset {\mathcal
X}_k$ is given by the equation
$$
a_{T,1}x_1^p-a_{T,0}x_0^p=0,
$$
where, say where, say, $a_{T,1}$ does not vanish on $T$, and
$$
\mathop{\rm ord}\nolimits_T a_{T,0}=l_T\in\{2,\dots,p-1\}.
$$
This means that if $p=2$, then the proof of Proposition~\ref{prop2} is
completed: the variety $X_k$ is a non-singular cyclic cover of
$U$.

Assume now that $p\geqslant 3$. Then along the subvariety
$\pi^{-1}_k(T)\cap X_k$ the variety $X_k$ has a cuspidal
singularity of the type
$$
t^p-s^{l_T}=0.
$$
Taking the normalization of the variety $X_k$ or applying the
obvious sequence of blow ups, we obtain the required variety $U_X$
that covers $U$ cyclically, and complete the proof of Proposition~\ref{prop2} for a cyclic cover of degree $p\geqslant 3$. This completes the
proof of Theorem~\ref{th1}.\qed

\section{Generalizations}

First of all, given that the divisorial
canonicity has been shown in \cite{Pukh06b,EcklPukh2016,Pukh18a}
for many families of Fano complete intersections of index 1 in the
projective space, we obtain the following generalization of
Corollary~\ref{cor3}.

\begin{corollary}\label{cor4}
For a divisorially canonical non-singular
Fano complete intersection $V\subset {\mathbb P}^{M+k}$ of
codimension $k$ and index 1, there are no non-trivial rational
Galois covers $X\stackrel{d:1}{\dashrightarrow} V$ with an abelian
Galois group of order $d\geqslant 2$, where $X$ is a rationally
connected variety; in particular, there are no rational maps
$X\dashrightarrow V$ of degree 2 with $X$ rationally connected.
\end{corollary}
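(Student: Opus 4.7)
The plan is to invoke Theorem~\ref{th1} and hence to verify that $V$, in addition to the assumed divisorial canonicity, satisfies the remaining hypotheses: it is a primitive Fano variety, and it satisfies the auxiliary conditions $(\star1)$ and $(\star2)$. Since $V$ is non-singular, factoriality and the terminality of singularities are automatic, and iterating the Lefschetz hyperplane theorem on $V\subset {\mathbb P}^{M+k}$ (with $M=\mathop{\rm dim\,}V\geqslant 3$) yields $\mathop{\rm Pic}V={\mathbb Z}H$, where $H$ is the restriction of the hyperplane class. Since $V$ has index~$1$, $-K_V=H$, so $\mathop{\rm Pic}V={\mathbb Z}K_V$ with $-K_V$ ample, and $V$ is primitive in the required sense.

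For condition $(\star2)$: lines on the smooth index-$1$ Fano complete intersection $V$ form a positive-dimensional family sweeping out a subvariety of codimension at most~$1$, since a standard dimension count gives that the Fano scheme $F(V)$ has expected dimension $M-3\geqslant 0$. For any closed $Y\subset V$ of codimension $\geqslant 2$, a generic line in this family avoids $Y$ and provides the required smooth rational curve.

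For condition $(\star1)$: I would adapt the hypersurface argument used in the proof of Corollary~\ref{cor3} essentially verbatim. Set $d=d_1\cdots d_k$. The intersection of $V$ with a general $(k+1)$-dimensional linear subspace of ${\mathbb P}^{M+k}$ is a smooth irreducible curve $N_0$ of degree $d$, disjoint from any prescribed $Y$ of codimension $\geqslant 2$, meeting any prescribed $R\in |-K_V|$ transversally at smooth points, and satisfying $(N_0\cdot K_V)=-d$. If $p\mathbin{\not|}d$, take $N=N_0$. Otherwise $p\mathbin{|}d$, hence $p\mathbin{\not|}(d-1)$; in this case, fix a line $L\subset V$ with $L\not\subset Y$ (so that $L\cap Y$ is finite) and, for a general $(k+1)$-plane $P\supset L$, let $N$ be the residual curve of $L$ in $P\cap V$, of degree $d-1$. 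By excluding the finitely many planes $P$ whose defining equations fail to separate $L$ transversally from $Y$ at each point of $L\cap Y$, and using that the image of $Y$ under the projection from $L$ has positive codimension in ${\mathbb P}^{M+k-2}$, one arranges that $N$ is disjoint from $Y$ and meets $R$ transversally at smooth points.

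The main obstacle is the technical check that the residual curve $N$ is smooth in the codimension-$k\geqslant 2$ setting, since $P\cap V$ is now an intersection of $k$ hypersurfaces in $P\cong{\mathbb P}^{k+1}$ rather than a plane curve. A Bertini-type argument should show that, for general $P$, the only singularities of $P\cap V$ are transverse nodes at the $d-1$ points of $L\cap N$, at each of which $N$ itself is smooth (being one branch of the node); once this is verified, the rest of the argument runs exactly as in the hypersurface case and the conclusion follows from Theorem~\ref{th1}.
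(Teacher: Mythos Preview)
Your approach matches the paper's almost exactly: verify $(\star1)$ and $(\star2)$ and then invoke Theorem~\ref{th1}. For $(\star1)$ the paper does precisely what you propose --- a general linear $(k+1)$-plane section when $p\nmid\deg V$, and the residual curve $N$ in $P\cap V=L+N$ for a general line $L\subset V$ when $p\mid\deg V$ --- and, like you, it does not spell out the Bertini-type check that $N$ is non-singular.

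The one point where you diverge is $(\star2)$. Your argument via lines is not quite safe: as the paper itself observes in the proof of Corollary~\ref{cor3}, on an index-one hypersurface the lines only sweep out a \emph{divisor}, and the same expected-dimension count (your $M-3$ should be $M-2$) gives the same conclusion for complete intersections. If $Y$ happens to sit inside that swept-out divisor, a generic line need not avoid it. The paper instead says $(\star2)$ is ``automatic'' because $V$ is a non-singular Fano variety: by Koll\'ar--Miyaoka--Mori $V$ is rationally connected, hence carries a very free rational curve, and the deformation argument already recalled in Section~\ref{section3} shows such a curve can be moved off any codimension~$\geqslant 2$ subset. Replacing your line argument with this appeal to very free curves closes the gap and brings your proof into line with the paper's.
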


\begin{proof}
We need only to check that the properties ($\star$1) and
($\star$2) are satisfied for a non-singular complete intersection $V$.
The property ($\star$2) is automatic and in order to show ($\star$1), we argue
as in the proof of Corollary~\ref{cor3}: if $p\mathbin{\not|} \mathop{\rm
deg} V$, then a section of $V$ by a general linear
$(k+1)$-subspace in ${\mathbb P}^{M+k}$ works as $N$, and if
$p\mathbin{|} \mathop{\rm deg} V$, we obtain $N$ as the residual
curve,
$$
P\cap V=L+N,
$$
where $L\subset V$ is a general line and $P$ a general linear
$(k+1)$-subspace in ${\mathbb P}^{M+k}$, containing $L$.
\end{proof}

\begin{remark}\label{rk2}
It seems that the claim of Theorem~\ref{th1} is true for
rationally connected Galois rational covers with an arbitrary
Galois group, not necessarily an abelian one. We may assume that
$X$  is a non-singular projective variety and the Galois group of
the extension ${\mathbb C} (V)\subset {\mathbb C}(X)$ is a finite
subgroup of the group $\mathop{\rm Aut} X$. Now the ramification
divisor of the finite cover $U_X\to U$ is invariant with respect
to the action of the Galois group and for that reason is pulled
back from $U$, so that the proof of Theorem~\ref{th1} must work in the
non-abelian case, too.
\end{remark}

\begin{conjecture}\label{conj2}
The claim of Theorem~\ref{th1} holds without the
assumption that the Galois group is abelian.
\end{conjecture}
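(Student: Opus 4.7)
The plan is to mimic the proof of Theorem~\ref{th1} almost verbatim, the only structural change being that the Galois group $G$ of the cover is now an arbitrary finite group. As noted in Remark~\ref{rk2}, the ramification divisor of any Galois cover is $G$-invariant and so descends to a well-defined branch divisor $W = \sum_j W_j$ on the base, with ramification index $e_j$ along the components lying over each $W_j$; this is the only Galois-theoretic input needed, and it is the sole place where the cyclic prime-order hypothesis was used.

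First I would replace $X$ by a smooth projective $G$-equivariant model on which $G$ acts by automorphisms with $X/G$ birational to $V$. Proposition~\ref{prop1} applies unchanged and produces $\varphi\colon V^+\to V$ with the free family $\mathcal{C}_V^+$. Proposition~\ref{prop2} must be reproved in a $G$-equivariant form: instead of the explicit $\xi^p=q$ construction of Section~\ref{section5}, take the normalization of $V^+$ in $\mathbb{C}(X)$, apply $G$-equivariant resolution of singularities, and restrict to an open set $U\subset V^+$ of complement codimension $\geqslant 2$ on which $\sigma_U\colon U_X\to U$ is a smooth Galois $G$-cover branched over a smooth divisor. The Hurwitz--Kodaira formula then gives
$$
K_{U_X} = \sigma_U^*\Bigl(K_V^+ + \sum_j\bigl(1-\tfrac{1}{e_j}\bigr)W_j\Bigr),
$$
valid for arbitrary $G$ precisely by the invariance in Remark~\ref{rk2}. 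Choosing a general $C\in\mathcal{C}_V^+$ and a birational lift $C_1\subset\sigma_U^{-1}(C)$ coming from a curve in $\mathcal{C}_X$, the $G$-translates $gC_1$ sweep out $U_X$, so $(C_1\cdot K_{U_X})<0$, and the projection formula yields $(C\cdot K_V^+) + \sum_j(1-\tfrac{1}{e_j})(C\cdot W_j) < 0$.

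From here the case analysis of the paper is reproduced with $p$ replaced by the various $e_j$ and the integer $n$ replaced by the rational number $\nu=\sum_j(1-\tfrac{1}{e_j})n_j$, where $\varphi_*W_{j,\mathrm{div}}\sim n_j H$. For $\nu\geqslant 1$ the argument proceeds verbatim on adding the non-positive quantity $(\nu-1)(C\cdot K_V^+)$ and contradicts the $\mathbb{Q}$-divisor version of divisorial canonicity, applied to the effective $\mathbb{Q}$-divisor $\sum_j(1-\tfrac{1}{e_j})\varphi_*W_{j,\mathrm{div}}\sim_\mathbb{Q} -\nu K_V$ (this $\mathbb{Q}$-divisor version follows at once from Definition~\ref{def1} by clearing denominators). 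The case $\nu=0$ is handled by $(\star 2)$ exactly as in the paper. The new intermediate range $0<\nu<1$, absent from the prime cyclic case, is treated by applying $(\star 1)$ with $p$ any prime dividing some $e_{j_0}$ with $n_{j_0}\geqslant 1$, choosing $N\subset V$ transverse to $R=\varphi(W_{j_0,\mathrm{div}})$ in a number of points not divisible by $p$ and disjoint from all other branch components and the bad loci.

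The principal obstacle is the equivariant construction in the analogue of Proposition~\ref{prop2}: Section~\ref{section5} leverages the explicit cyclic structure to perform controlled elementary transformations of a $\mathbb{P}^1$-bundle, whereas for general $G$ one must rely on equivariant resolution and check that the branch divisor can simultaneously be made smooth over a codimension-$\geqslant 2$ complement with the ramification indices $e_j$ unambiguously assigned to each component. A secondary technicality is the intermediate case $0<\nu<1$, which does not arise in the prime cyclic setting; resolving it cleanly may require a mild strengthening of $(\star 1)$ allowing $N$ to be chosen with prescribed intersection-modulo-$p$ condition against one branch component while being disjoint from the others, but this strengthening is easily verified in all the families for which $(\star 1)$ was established in Section~\ref{section3} and Remark~\ref{rk1}.
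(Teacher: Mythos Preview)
The statement you are attempting to prove is Conjecture~\ref{conj2}, not a theorem: the paper does not prove it. Remark~\ref{rk2} sketches exactly the heuristic you follow (the ramification divisor is $G$-invariant, hence pulled back from $U$), but the author deliberately leaves the claim open. So there is no paper proof to compare against; your proposal is an attack on an open problem.

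Your outline is the natural one and the case $\nu\geqslant 1$ does go through essentially as you say, including the passage to $\mathbb{Q}$-divisors. The two obstacles you flag are the right ones, but the second is more serious than you indicate. Your treatment of $0<\nu<1$ does not work as written. First, condition $(\star 1)$ is stated only for an \emph{anticanonical} divisor $R\in|-K_V|$, whereas $\varphi(W_{j_0,\mathrm{div}})\sim n_{j_0}H$ with $n_{j_0}$ possibly $>1$; you cannot invoke $(\star 1)$ directly, and the ``mild strengthening'' you propose is an additional hypothesis, not a consequence of the existing one. Second, and more fundamentally, even granting a curve $N$ meeting only $W_{j_0}$ transversally in a number of points not divisible by a prime $p\mid e_{j_0}$, the map $\sigma_U^{-1}(N^+)\to N^+$ is a $G$-Galois cover of curves with inertia groups of order $e_{j_0}$, and for non-cyclic $G$ there is no divisibility obstruction on the number of branch points analogous to the one used for cyclic $p$-covers (where it comes from the existence of a $p$-th root of the branch line bundle). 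So the contradiction you claim does not follow. This gap is exactly why the abelian hypothesis is used in the paper: it permits the reduction to a prime cyclic quotient $X/H\to V$, a reduction unavailable when $G$ is perfect.
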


\begin{remark}\label{rk3}
The fact that there are no rationally connected
rational double covers for a hypersurface $V\subset {\mathbb P}$
of degree $M+1$, where $M\geqslant 5$, was obtained from the
property that every pair $(V,\frac{1}{n}D)$, where $D\sim nH$, is
canonical. However, for certain special hypersurfaces $V$ this is
not true: for instance, if $V\cap T_oV$ is a cone with the vertex
at the point $o$. On the other hand, it seems probable that the
claim of Theorem \ref{th1} is true for any smooth hypersurface of index 1
for $M\geqslant 5$ --- and, possibly, for four-dimensional
quintics and three-dimensional quartics, for which the technique
of \cite{Pukh05} does not prove the property of divisorial
canonicity. Note also that it is hard to see any obstructions for
the Conjecture~\ref{conj1} to be true at least for any smooth hypersurface
of index 1 and dimension $\geqslant 4$. At the same time, as
Segre's example shows, for certain smooth three-dimensional
quartics Conjecture~\ref{conj1} does not hold because they are unirational,
see \cite{IM}.
\end{remark}

\section{Historical remarks and acknowledgements}
As far as I know, the first time when a conjecture on non-unirationality of
certain rationally connected three-folds (in that case, conic
bundles) was explicitly stated, was in \cite{Fano1931} by Gino
Fano himself. In \cite{Kollar1996} Koll\'{a}r suggested an
approach to proving the non-unirationality of primitive Fano
hypersurfaces: if the variety parameterizing rational curves of
some arbitrary fixed degree passing through a general point $o\in
V$ contains no rational curves, then there are no rational
surfaces on $V$ containing this point, which of course implies
non-unirationality. The suggested approach motivated a far
reaching investigation of the space of rational curves on
hypersurfaces in \cite{BeheshtiStarr,BeheshtiKumar}. For the
various explicit results on unirationality (solving the
unirationality problem affirmatively) see, for instance,
\cite{HarrisMazurPand1998,ConteMurre,Conte2001,ConteMarMurre2009,
ConteMarchisio}. A topic that can be linked to the conjecture on
absolute rigidity is endomorphisms, especially rational
endomorphisms of rationally connected varieties; there are quite a
few papers on that subject, see, for instance,
\cite{AproduKebPeternell2008,HwangNakayama2011,AmerikBogomRov2011,
Zhang2012,Zhang2014}.

The author is grateful to the colleagues in the Divisions of
Algebraic Geometry and Algebra at Steklov Institute of Mathematics
for the interest to his work, and to the colleagues-algebraic
geometers at the University of Liverpool for the general support.

Finally, I would like to thank both referees for their work on my
paper and a number of useful suggestions.


\ifx\undefined\bysame
\newcommand{\bysame}{\leavevmode\hbox to3em{\hrulefill}\,}
\fi


\begin{thebibliography}{99}

\bibitem[ABR11]{AmerikBogomRov2011} E.~Amerik, F.~Bogomolov and M.~Rovinsky,
\emph{Remarks on endomorphisms and rational points}, Compos. Math.
{\bf 147} (2011) 1819--1842.

\bibitem[AKP08]{AproduKebPeternell2008} M.~Aprodu, S.~Kebekus and T.~Peternell, \emph{Galois coverings and endomorphisms of projective varieties},
Math. Z. {\bf 260} (2008), no.~2, 431--449.

\bibitem[BK13]{BeheshtiKumar} R.~Beheshti and N.~M.~Kumar,
\emph{Spaces of rational curves on complete intersections}, Compos. Math.
{\bf 149} (2013), no.~6, 1041--1060.

\bibitem[BS08]{BeheshtiStarr} R.~Beheshti and J.~M.~Starr, \emph{Rational surfaces in
index-one Fano hypersurfaces}, J. Algebraic Geom. {\bf 17} (2008),
no.~2, 255--274.

\bibitem[Con01]{Conte2001} A.~Conte, \emph{The unirationality of all conic bundles
implies the unirationality of the quartic threefold} with an
appendix by M. Marchisio. Lecture Notes in Pure and Appl.
Math., {\bf 217}, Geometric and combinatorial aspects of
commutative algebra (Messina, 1999), 125--129, Dekker, New York,
2001.

\bibitem[CMM09]{ConteMarMurre2009} A.~Conte, M.~Marchisio and J.~P.~Murre, \emph{On the
unirationality of the quintic hypersurface containing a
3-dimensional linear space}, Atti Accad. Sci. Torino Cl. Sci. Fis.
Mat. Natur. {\bf 142} (2008), 89--96 (2009).

\bibitem[CM98]{ConteMurre} A.~Conte and J.~P.~Murre, \emph{On a theorem of Morin on the
unirationality of the quartic fivefold}, Atti Accad. Sci. Torino
Cl. Sci. Fis. Mat. Natur. {\bf 132} (1998), 49--59.

\bibitem[CM06-08]{ConteMarchisio} A.~Conte and M.~Marchisio, \emph{Some questions of
(uni)rationality, I, II,III}, Atti Accad. Sci. Torino Cl. Sci. Fis.
Mat. Natur. {\bf 140} (2006), 3--6 (2007); {\bf 140} (2006),
19--20 (2007); {\bf 141} (2007), 19--21 (2008).


\bibitem[CS08]{ChShr2008} I.~A.~Cheltsov  and K.~A.~Shramov, \emph{Log canonical
thresholds of non-singular Fano threefolds}, Russian Math.
Surveys \textbf{63} (2008), no.~5, 859--958.

\bibitem[CPW14]{ChParkWon2014} I.~A.~Cheltsov, J.~Park and J.~Won, \emph{Log
canonical thresholds of certain Fano hypersurfaces}, Math. Z. {\bf
276} (2014), no.~1-2, 51--79.

\bibitem[EP16]{EcklPukh2016} T.~Eckl and A.~V.~Pukhlikov, \emph{On the global
log canonical threshold of Fano complete intersections}, Eur. J.
Math. {\bf 2} (2016), no.~1, 291--303.

\bibitem[Fan31]{Fano1931} G.~Fano, \emph{Sulle variet\`{a} algebriche a tre
dimensioni aventi tutti i generi nulli}, Atti del Congresso
Internazionale dei Matematici, Bologna, 3-10 Settembre 1928,
Zanichelli Bologna (1931) 115--121.

\bibitem[HMP98]{HarrisMazurPand1998} J.~Harris, B.~Mazur and
R.~Pandharipande, \emph{Hypersurfaces of low degree}, Duke Math. J. {\bf
95} (1998), no.~1, 125--160.

\bibitem[HN11]{HwangNakayama2011} J.-M.~Hwang and N.~Nakayama, \emph{On
endomorphisms of Fano manifolds of Picard number one}, Pure Appl.
Math. Q. {\bf 7} (2011), no.~4, Special Issue: In memory of Eckart
Viehweg, 1407--1426.

\bibitem[IM71]{IM} V.~A.~Iskovskikh  and Y.~I.~Manin,
\emph{Three-dimensional quartics and counterexamples to the L\" uroth
problem}, Math. USSR--Sb. {\bf 15} (1971), no.~1, 141--166.

\bibitem[Kol96]{Kol96} J.~Koll{\'a}r, \emph{Rational curves on algebraic
varieties}, Springer-Verlag, Berlin, 1996.

\bibitem[Kol98]{Kollar1996} J.~Koll\'{a}r, \emph{Low degree polynomial equations:
arithmetic, geometry and topology}, European Congress of
Mathematics, Vol. I (Budapest, 1996), 255--288, Progr. Math., {\bf
168}, Birkh\"{a}user, Basel, 1998.

\bibitem[LZ19]{LiuZhuang2018} Y.~Liu  and Z.~Zhuang, \emph{Birational
superrigidity and K-stability of singular Fano complete
intersections}, Int. Math. Res. Notices (2019), \url{https://doi.org/10.1093/imrn/rnz148}.

\bibitem[Puk05]{Pukh05} A.~V.~Pukhlikov, \emph{Birational geometry of Fano direct
products}, Izv. Math. {\bf 69} (2005), no.~6, 1225--1255.

\bibitem[Puk06]{Pukh06b} \bysame, \emph{Birational geometry
of algebraic varieties with a pencil of Fano complete
intersections}, Manuscripta Math. {\bf 121} (2006), 491--526.

\bibitem[Puk15]{Pukh15a} \bysame, \emph{Birationally rigid Fano fibrations. II},
Izv. Math. {\bf 79} (2015), no.~4, 809--837.

\bibitem[Puk18]{Pukh18a} \bysame, \emph{Canonical and log canonical
thresholds of Fano complete intersections}, Eur. J. Math. {\bf 4}
(2018), no.~1, 381--398.

\bibitem[Puk19]{Pukh19a} \bysame, \emph{Canonical and log canonical
thresholds of multiple projective spaces}, Eur. J. Math. (2019),
\url{https://doi.org/10.1007/s40879-019-00388-7}.

\bibitem[Zha12]{Zhang2012} D.-Q.~Zhang, \emph{Rationality of rationally
connected threefolds admitting non-isomorphic endomorphisms},
Trans. Amer. Math. Soc. {\bf 364} (2012), no.~12, 6315--6333.

\bibitem[Zha14]{Zhang2014} \bysame, \emph{Invariant hypersurfaces of
endomorphisms of projective varieties}, Adv. Math. {\bf 252}
(2014), 185--203.

\bibitem[Zhu18]{Zhuang2018} Z.~Zhuang, \emph{Birational superrigidity and
K-stability of Fano complete intersections of index one},
Duke Math. J. \textbf{169} (2020), no.~12, 2205--2229.

\end{thebibliography}
\end{document}